\DeclareMathOperator*{\argmax}{arg\,max}
\DeclareMathOperator*{\argmin}{arg\,min}
\newcommand{\ba}{\vspace{0mm}\begin{equation*}\begin{aligned}}
\newcommand{\ea}{\vspace{0mm}\end{aligned}\end{equation*}}
\newcommand{\ban}{\vspace{0mm}\begin{equation}\begin{aligned}}
\newcommand{\ean}{\vspace{0mm}\end{aligned}\end{equation}}
\newcommand{\ben}{\vspace{0mm}\begin{equation}}
\newcommand{\een}{\vspace{0mm}\end{equation}}
\begin{document}

\title{Pedigree in the biparental Moran model}

\author{Camille Coron \and Yves Le Jan}
\institute{Camille Coron \at Universit\'e Paris-Saclay, CNRS, Laboratoire de math\'ematiques d'Orsay, 91405, Orsay, France  \email{camille.coron@universite-paris-saclay.fr}  \and Yves Le Jan \at NYU, Universit\'e Paris-Saclay, CNRS, Laboratoire de math\'ematiques d'Orsay, 91405, Orsay, France}
 
\maketitle 
 
\begin{abstract}
Our goal is to study the genetic composition of a population in which each individual has $2$ parents, who contribute equally to the genome of their offspring. We use a biparental Moran model, which is characterized by its fixed number $N$ of individuals. We fix an individual and consider the proportions of the genomes of all individuals living $n$ time steps later, that come from this individual. When $n$ goes to infinity, these proportions all converge almost surely towards the same random variable.  When $N$ then goes to infinity, this random variable multiplied by $N$ (i.e. the stationary weight of any ancestor in the whole population) converges in law towards the mixture of a Dirac measure in 0 and an exponential law with parameter $1/2$, and the weights of several given ancestors are independent. This gives an explicit formula for the limiting (deterministic) distribution of all ancestors' weights.
\end{abstract}

\keywords{Biparental Moran model \and Ancestor's genetic contribution \and Pedigree\and Genealogy\and $k$-particle Markov chain\and stationary distribution \and Large population size limit}

\subclass{92D10\and 60J10\and 60J90}

\section{Introduction}

\smallskip
This paper deals with the role of the pedigree, i.e. the complete description of all ancestral lines, in the genetic composition of a population with biparental genetic transmission.

In population genetics models with monoparental transmission the number of ancestors of a sample of individuals classically decreases backwards in time, till the occurrence of a common ancestor from which all the genetic material of the sampled individuals necessarily derives. \textcolor{black}{On the contrary}, as was already observed in several articles, notably \cite{Derrida2000,Chang1999}, we expect the genetic material of a sample of individuals in populations with biparental genetic transmission to originate from a large number of ancestors, that is not decreasing nor constant backwards in time. From this observation, an element of interest in these populations, is the contribution of a set of ancestors to the genetic material of a sample of individuals. As mentioned in several articles, (\cite{Wakeley2005,Wakeley2012,Wakeley2016,Wilton2017}), this contribution is highly dependent on the pedigree, i.e. the \textcolor{black}{joint succession of ancestors}, of the sampled individuals.

Biparental genealogies have received some interest, notably in \cite{Chang1999,Derrida2000,GravelSteel2015}, in which time to recent common ancestors and ancestors' weights are investigated for the Wright-Fisher biparental model. In \cite{Derrida2000}, a convergence result was stated based on a first and second moment calculation and an unproved independence ansatz.  The time to \textcolor{black}{the} recent common ancestors for the biparental Moran model is also studied in \cite{Linder2009CommonAI}. \textcolor{black}{The convergence of the genealogical process of a sample of individuals} towards a diploid coalescent is considered notably in \cite{Mohle1998,MohleSagitov2003,Birkneretal2013,BirknerLiuSturm2018} respectively for the diploid Wright-Fisher model and for a $2$-sex Wright-Fisher model. \cite{MatsenEvans2008} and \cite{BartonEtheridge2011} study the link between pedigree, individual reproductive success and genetic contribution. Finally, in \cite{BairdBartonEtheridge2003,Lambertetal2018}, recombination between loci and its impact on genome transmission in populations with sexual reproduction are investigated.

\smallskip
In this article we consider \textcolor{black}{the Moran biparental model in which we assume that both parents and the replaced individual are in different sites} and study the amounts of genetic material transmitted by all ancestors in future generations. More precisely, we define the "weight" of an ancestor in a given sampled individual as the probability that a given gene of the sampled individual comes from this ancestor. Note that this definition implicitely conceptualizes the genome as a concatenation of infinitely-many independently segregating loci that have no impact on reproductive success. This idealized model however has the advantage of providing explicit formulas for the asymptotic weights of ancestors. We prove (Theorem \ref{mainthm} and Corollary \ref{maincor}) first that the weights of an ancestor in all individuals are asymptotically equal \textcolor{black}{ (this was proved in \cite{Chang1999} for the Wright-Fisher model and one can think it holds under quite general assumptions)}. Secondly, we prove that the total weights of $l$ ancestors in the population \textcolor{black}{(properly rescaled by a factor of $N$)} converge in law when the number of individuals $N$ goes to infinity, to a vector of $l$ independent random variables, that are equal to 0 with probability $1/2$, or follow an exponential law with parameter $1/2$. This result also gives \textcolor{black}{(Corollary \ref{corhisto})} that the properly rescaled plot of ordered weights of all ancestors converges to the inverse of the cumulative distribution function associated to this limiting law, which is illustrated by simulation outcomes \textcolor{black}{(Figure \ref{FigBarplot})}. To prove these results we first introduce the pedigree graph of \textcolor{black}{the population}, representing the parental \textcolor{black}{links} between individuals. The genealogy of a gene is then a random walk on this graph, going backwards in time. The moments of the weights of $l$ ancestors are then studied by considering the dynamics and the stationary distribution of a $k$-particle random walk on the random pedigree graph (for any $k\geq l$). We next find an appropriate projection of this $k$-particles random walk (on the space of multiplicities of multisets with cardinality $k$) which remains a Markov chain due to symmetries of the model. The particular dynamics of this new Markov chain when the number $N$ of individuals is large finally \textcolor{black}{leads us to use} to the characterization of \textcolor{black}{the stationary distribution of Markov chains} based on oriented spanning trees, given notably in \cite{Shubert1975}. The limiting (when $N$ goes to infinity) stationary distribution of the $k$ particle Markov chain as well as the limiting law of ancestors\textcolor{black}{'} weights are then derived.

\section{Biparental Moran model}\label{sec:model}

We consider a population of $N$ individuals following a neutral biparental Moran model. More precisely, individuals are numbered by $\{1, 2, ..., N\}=I$ and at each discrete time step $t\in\mathbb{N}=\{0,1,2,...\}$, a triplet of distinct individuals $(\pi_t,\mu_t,\kappa_t)\in I^3$ is chosen uniformly at random among the population. The first two individuals, $\pi_t$ (father) and $\mu_t$ (mother), produce one new offspring that replaces the individual $\kappa_t\in I$. This forms the population at time $t+1$. Note that one could alternatively choose the three individuals $\pi_t$, $\mu_t$ and $\kappa_t$ uniformly and independently at random in the population, at any time $t$. The difference between these two models should be negligible when $N$ is large and all the results stated from now should remain true for this second model, though some calculations are simpler in the first model that we now consider. 

This reproduction dynamics defines an oriented random graph on $I\times\mathbb{N}$ (as represented in Figure \ref{FigPedigree}), denoted $\mathcal{G}_N$, representing the pedigree of the population, such that between time $t+1$ and time $t$, two arrows are drawn from $(\kappa_t,t+1)$ to $(\pi_t,t)$ and $(\mu_t,t)$ respectively and $N-1$ arrows are drawn from $(x,t+1)$ to $(x,t)$ for each $x\in I\setminus\{\kappa_t\}$.

\begin{figure}
\begin{center}
\includegraphics[scale=0.3, trim=5cm 4cm 3cm 0cm]{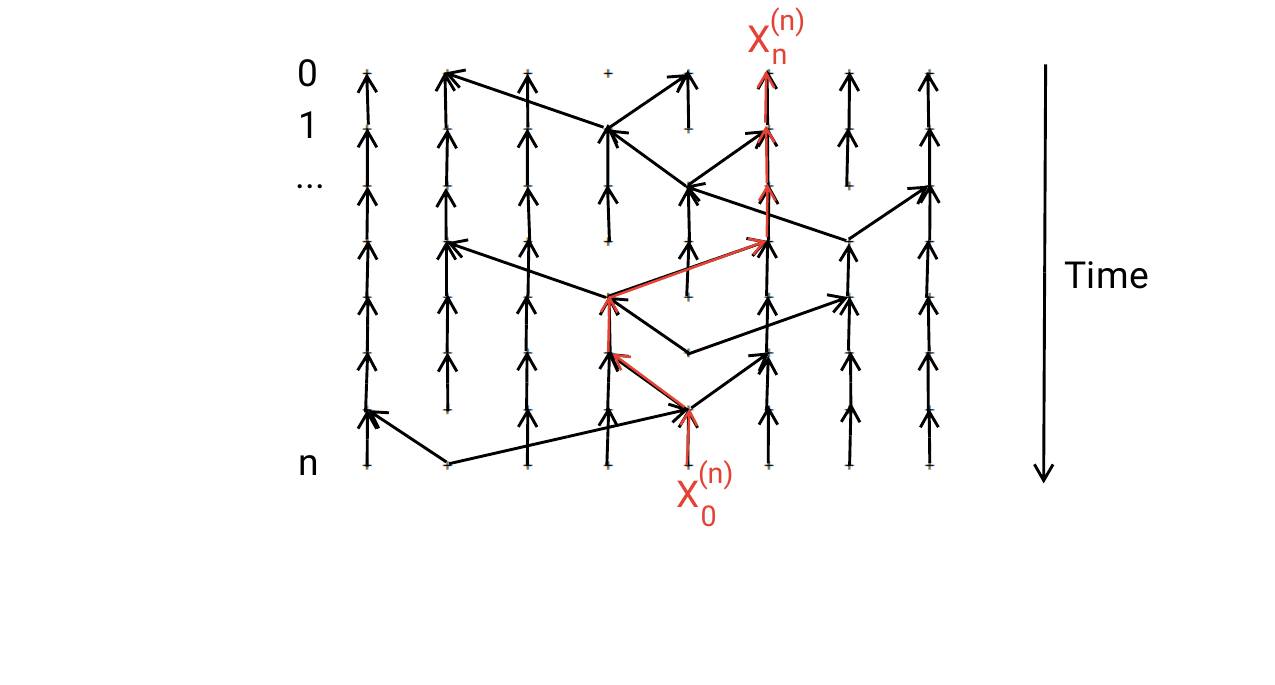}
\end{center} 
\caption{Graph representing the pedigree of a population with $8$ individuals, during $7$ time steps. \textcolor{black}{The time orientation is from past to future.}\label{FigPedigree}}
\end{figure}

Now let us consider a gene (portion of genome) of an individual $i$ present in the population at time $n$. The genealogy of this gene (i.e. the individual in which a copy of this gene was present, assuming no mutation and no recombination, at each time $t=n-k\leq n$), denoted by $(X^{(n)}_k, n-k)_{0\leq k\leq n}$, is a random walk on this random graph, starting from the position $(i,n)$. One can then consider the random variable  

\begin{equation} \label{eq:defA} A_n(i,j)=\mathbb{P}(X^{(n)}_n=j|X^{(n)}_0=i,\mathcal{G}_N)\end{equation}
(which is a deterministic function of the random graph $\mathcal{G}_N$). This quantity, given the genealogy $\mathcal{G}_N$, is the probability that any gene of individual $i$ living \textcolor{black}{in} generation $n$ comes from ancestor $j$ living at generation $0$. If genome size is very large and \textcolor{black}{the evolutions of distant genes} are sufficiently decorrelated, we can expect this quantity to be close to the proportion of genes of individual $i$ that come from individual $j$. This quantity will also be called the weight of the ancestor $j$  in the genome of individual $i$. It is also natural to consider the random variable
$$M_n(j)=\sum_{i=1}^N A_n(i,j)$$ that measures the weight of the ancestor $j$, in the population living $n$ time steps later. Note that $M_n(j)\in[0,N]$ and that $\sum_{j=1}^NM_n(j)=N$ for all $n$.

\bigskip
Let us denote by $\mathcal{F}_n$ the filtration associated to the restriction of $\mathcal{G}_N$ to $I\times\{0,...,n\}$. We \textcolor{black}{then} have the following lemma:

\begin{lemma}\label{lemmaM}
For all $j$ the stochastic process $(M_n(j))_{n\in\mathbb{N}}$ is a martingale with respect to the filtration $(\mathcal{F}_n)_{n\in\mathbb{Z}_+}$, whose law is independent of $j$.
\end{lemma}

\begin{proof} 
\begin{align*}M_{n+1}(j)&=\sum_{i=1}^N \mathbb{P}(X_{n+1}^{(n+1)}=j|X_{0}^{(n+1)}=i,\mathcal{G}_N)\quad\quad\text{by definition}\\&=\sum_{i=1}^N\sum_{i'=1}^N \mathbb{P}(X_{n+1}^{(n+1)}=j,X_{1}^{(n+1)}=i'|X_{0}^{(n+1)}=i,\mathcal{G}_N)\\&=\sum_{i=1}^N\sum_{i'=1}^N \mathbb{P}(X_{n+1}^{(n+1)}=j|X_{1}^{(n+1)}=i',X_{0}^{(n+1)}=i,\mathcal{G}_N)\\&\hspace{1cm}\times\mathbb{P}(X_{1}^{(n+1)}=i'|X_{0}^{(n+1)}=i,\mathcal{G}_N)\\&=\sum_{i=1}^N\sum_{i'=1}^N \mathbb{P}(X_{n+1}^{(n+1)}=j|X_{1}^{(n+1)}=i',\mathcal{G}_N)\\&\hspace{1cm}\times\mathbb{P}(X_{1}^{(n+1)}=i'|X_{0}^{(n+1)}=i,\mathcal{G}_N) \quad\text{by the Markov property.}\\&=\sum_{i=1}^N\sum_{i'=1}^N \mathbb{P}(X_{n}^{(n)}=j|X_{0}^{(n)}=i',\mathcal{G}_N)\mathbb{P}(X_{1}^{(n+1)}=i'|X_{0}^{(n+1)}=i,\mathcal{G}_N)\\&=\sum_{i=1}^N\sum_{i'=1}^N A_n(i',j)\mathbb{P}(X_{1}^{(n+1)}=i'|X_{0}^{(n+1)}=i,\mathcal{G}_N)\quad\quad\text{by definition of $A_n$.}\end{align*}

Therefore since $A_n(i',j)\in\mathcal{F}_n$ and $\mathbb{P}(X_{1}^{(n+1)}=i'|X_{0}^{(n+1)}=i,\mathcal{G}_N)$ is independent of $\mathcal{F}_n$,
\begin{align*}\mathbb{E}(M_{n+1}(j)|\mathcal{F}_n)&=\sum_{i'=1}^NA_n(i',j)\sum_{i=1}^N \mathbb{E}\left(\mathbb{P}(X_{1}^{(n+1)}=i'|X_{0}^{(n+1)}=i,\mathcal{G}_N)\right)\\&=\sum_{i'=1}^NA_n(i',j)N\mathbb{P}(X_1^{(n+1)}=i') \\&=\sum_{i'=1}^NA_n(i',j) \quad\text{by exchangeability of elements of $I$}\\&=M_n(j).\end{align*} \qed
 \end{proof}

As $M_n(j)\in[0,N]$ for all $n$ and $j$, the martingale $(M_n(j))_{n\in\mathbb{N}}$ is bounded and therefore converges almost surely to some random variable $M_{\infty}(j)$ when $n$ goes to infinity. 
Our main result is now the following

\begin{theorem}\label{mainthm}
\begin{description}
\item[$(i)$] For all $j\in I$, there exists a random variable $A_{\infty}(j)$ such that  $$A_n(i,j)\xrightarrow[n\rightarrow\infty]{} A_{\infty}(j) \quad a.s.$$ In particular,

$$M_{\infty}(j)=N A_{\infty}(j) \quad a.s.$$

Note that the distribution of $M_{\infty}$ depends on $N$.

\item[$(ii)$] For any $l\leq N$ and any $k_1,...,k_l \in \mathbb{Z}_+$, 

\begin{equation}\label{eq:limitlawM}\mathbb{E}\left(M_{\infty}^{k_1}(1)...M_{\infty}^{k_l}(l)\right) \xrightarrow[N\rightarrow\infty]{} \prod_{i=1}^l 2^{k_i-1}k_i!\quad.\end{equation}
\end{description}
\end{theorem}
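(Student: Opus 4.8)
\emph{Part $(i)$.} The plan is to couple ancestral lineages. Fix $j$ and two starting individuals $i,i'$; run the two backward walks defining $A_n(i,j)$ and $A_n(i',j)$ on the \emph{same} realisation of $\mathcal G$, using the standard coupling that makes them move together once they meet. As long as they occupy distinct individuals, a one-step computation shows they coalesce with probability exactly $c_N:=\tfrac{2}{N(N-1)}>0$ (one lineage must sit on $\kappa_n$, the other on a selected parent, and the relevant coin must agree), independently of their positions. Hence the quenched non-coalescence probability $\beta_n(\mathcal G)$ satisfies $\mathbb E(\beta_n)\le(1-c_N)^n$, which is summable, so $\beta_n\to0$ almost surely; since the coupling inequality gives $|A_n(i,j)-A_n(i',j)|\le\beta_n$, we get $\max_{i,i'}|A_n(i,j)-A_n(i',j)|\to0$ a.s. Combined with the a.s.\ convergence of the bounded martingale $M_n(j)/N=\tfrac1N\sum_i A_n(i,j)$, this yields $A_n(i,j)\to A_\infty(j):=M_\infty(j)/N$ a.s., proving $(i)$.

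\emph{Reduction and the case $k=2$.} Since $0\le M_n(j)\le N$, bounded convergence gives $\mathbb E(M_\infty^{k_1}(1)\cdots M_\infty^{k_l}(l))=\lim_n \mathbb E(M_n^{k_1}(1)\cdots M_n^{k_l}(l))$, so it suffices to compute the stationary moments. Writing $w^{(a)}_n(i):=A_n(i,a)$, the first-step decomposition of Lemma~\ref{lemmaM} shows that, given $\mathcal F_n$, one passes to time $n+1$ by the linear update $w^{(a)}_{n+1}(\kappa_n)=\tfrac12 w^{(a)}_n(\pi_n)+\tfrac12 w^{(a)}_n(\mu_n)$ and $w^{(a)}_{n+1}(i)=w^{(a)}_n(i)$ for $i\neq\kappa_n$, with $(\pi_n,\mu_n,\kappa_n)$ uniform and independent of $\mathcal F_n$. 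I would first treat one ancestor $a$ and $k=2$: with $Q_n=\mathbb E(\sum_i w_n(i)^2)$ and $P_n=\mathbb E(M_n^2)$, a direct computation gives
\begin{align*}
Q_{n+1}-Q_n&=-\tfrac{1}{2N}Q_n+\tfrac{1}{2N(N-1)}(P_n-Q_n),\\
P_{n+1}-P_n&=\tfrac{3}{2N(N-1)}(NQ_n-P_n),
\end{align*}
from which $3\sum_i w_n(i)^2+M_n^2$ is a martingale. Evaluating it at $n=0$ (where $w_0(i)=\un_{\{i=a\}}$, so its value is $4$) and at $n=\infty$ (where part $(i)$ forces $w_\infty(i)\equiv A_\infty(a)$, hence $\sum_i w_\infty(i)^2=M_\infty^2/N$) yields $\mathbb E(M_\infty^2)=\tfrac{4N}{N+3}\to4=2^{1}\,2!$.

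\emph{General $k$.} The same mechanism should apply. For a partition $\lambda=(\lambda_1,\dots,\lambda_m)\vdash k$ set $T_\lambda(n)=\mathbb E\big(\sum_{s_1,\dots,s_m\ \mathrm{distinct}} w_n(s_1)^{\lambda_1}\cdots w_n(s_m)^{\lambda_m}\big)$; because the update is linear and touches only the three uniform sites $(\pi_n,\mu_n,\kappa_n)$, the finite vector $(T_\lambda(n))_{\lambda\vdash k}$ obeys a closed linear recursion which I expect to admit a one-dimensional space of conserved combinations $\sum_{\lambda}c_\lambda T_\lambda(n)$. Using $T_\lambda(0)=0$ except $T_{(k)}(0)=1$, together with part $(i)$ (which gives $M_\infty=NA_\infty(a)$, hence $\mathbb E(M_\infty^k)=N^k\mathbb E(A_\infty(a)^k)$ and $T_\lambda(\infty)=\tfrac{N!}{(N-m)!}\mathbb E(A_\infty(a)^k)$), conservation determines $\mathbb E(A_\infty(a)^k)$ and, after letting $N\to\infty$, leads to $\mathbb E(M_\infty^k)\to c_{(k)}/c_{(1^k)}$, where $(1^k)$ is the all-singletons partition (the $m=k$ term dominating the denominator). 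The remaining task is the combinatorial identity $c_{(k)}/c_{(1^k)}=2^{k-1}k!$; I expect the coefficients to be governed by the recursion $m_k=2k\,m_{k-1}$, $m_1=1$, whose solution is exactly $2^{k-1}k!$.

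\emph{Joint moments and the main difficulty.} For the full statement I would run the multi-colour version of the $T_\lambda$-system, tracking how the $K=\sum_a k_a$ occupied sites carry the colours $1,\dots,l$; since $M_\infty(a)=NA_\infty(a)$, the target becomes $N^{K}\mathbb E\big(\prod_a A_\infty(a)^{k_a}\big)\to\prod_a 2^{k_a-1}k_a!$, that is, asymptotic independence of the $A_\infty(a)$. The step I expect to be the main obstacle is showing that as $N\to\infty$ only configurations with all $K$ lineages at pairwise distinct sites contribute at leading order — every coincidence of two lineages, and a fortiori any coincidence of two different colours, costing a factor $1/N$ — so that the conserved-quantity computation factorises over the colours and reproduces $\prod_a c_{(k_a)}/c_{(1^{k_a})}$. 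Organising the general-$k$ conserved coefficients and rigorously controlling these cross-colour coincidence corrections is where the real work lies; the moment limits $2^{k-1}k!$ grow slowly enough (Carleman's condition) to then upgrade $(ii)$ to the convergence in law announced in the abstract.
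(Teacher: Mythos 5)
Your part $(i)$ is correct and takes a genuinely different route from the paper. The paper argues via the monotone sequences $L_n(j)=\max_i A_n(i,j)$ and $l_n(j)=\min_i A_n(i,j)$ and a contraction event of positive probability leading to a contradiction; you instead couple two backward lineages and use that two lineages at distinct sites coalesce in one backward step with annealed probability exactly $2/(N(N-1))$ (the fresh triplet $(\pi,\mu,\kappa)$ being independent of the increments already used), giving $\mathbb{E}(\beta_n)\le(1-c_N)^n$, summability, and $\max_{i,i'}|A_n(i,j)-A_n(i',j)|\to0$ a.s.; combined with the bounded martingale $M_n(j)$ this is complete, and arguably more quantitative than the paper's argument. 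Your $k=2$ computation is also correct: I checked both recursions and that $3\sum_i w_n(i)^2+M_n^2$ is conserved, yielding the exact value $\mathbb{E}(M_\infty^2)=4N/(N+3)$, which the paper's method (stationary law of the annealed $k$-particle chain plus a spanning-tree expansion) does not give in closed form for finite $N$.

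For the general statement $(ii)$, however — arbitrary $k$ and, above all, the joint moments encoding asymptotic independence — what you write is a program rather than a proof, and the missing steps are precisely where the paper has to work. Concretely: (a) you assert, but do not prove, that the closed linear system for $(T_\lambda)_{\lambda\vdash k}$ has a one-dimensional space of conserved combinations with $c_{(1^k)}\neq0$; existence of some conserved combination follows from $T(\infty)\neq0$ lying in the right kernel of the increment matrix, but uniqueness and non-degeneracy are not established, and since that matrix depends on $N$ so a priori do the $c_\lambda$ — the $N$-independence of the coefficients $(3,1)$ for $k=2$ is not obviously generic, and you need to control this dependence before extracting the limit $c_{(k)}/c_{(1^k)}$. (b) The identity $c_{(k)}/c_{(1^k)}=2^{k-1}k!$ is only conjectured via the recursion $m_k=2k\,m_{k-1}$, which is consistent with the target but is not derived from the dynamics; the paper instead derives and solves the explicit recursion \eqref{eq:equationK} for the leading coefficients. (c) The factorization over colours, i.e.\ that every configuration in which two lineages (in particular two lineages of different colours) coincide contributes at lower order, is exactly the content of the paper's estimate \eqref{eq:equivalentnu}, obtained from the Freidlin--Wentzell tree formula \eqref{eq:Freidlin}; you correctly identify this as the main obstacle but leave it unproved. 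So the proposal fully establishes $(i)$ and the case $k=2$, $l=1$ of $(ii)$, but not the theorem as stated.
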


\bigskip
A corollary of the second point of this theorem is

\begin{corollary} \label{maincor} For $l\leq N$, let us define $R_1,...,R_l$, independent random variables on $\mathbb{R}_+$, such that $R_i$ is equal to $0$ with probability $1/2$ or follows an exponential law with parameter $1/2$). \textcolor{black}{Then}
$$\left(M_{\infty}(1),...,M_{\infty}(l)\right) \xRightarrow[N\rightarrow\infty]{} (R_1,...,R_l).$$
\end{corollary}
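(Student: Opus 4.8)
The plan is to deduce the corollary from Theorem \ref{mainthm}$(ii)$ by the method of moments, since the right-hand side of \eqref{eq:limitlawM} is exactly the joint moment sequence of the proposed limit vector $(X_1,\dots,X_l)$. First I would verify this moment match. For a single variable $X$ with density $\frac12\delta_0(t)+\frac14 e^{-t/2}$ on $\mathbb{R}_+$, the atom at $0$ contributes nothing to $\mathbb{E}(X^k)$ for $k\geq 1$, so
\[
\mathbb{E}(X^k)=\frac14\int_0^\infty t^k e^{-t/2}\,dt = 2^{k-1}k!,
\]
by the substitution $u=t/2$ and the Gamma integral. Since $X_1,\dots,X_l$ are independent, their mixed moments factorize as $\mathbb{E}\bigl(\prod_{i=1}^l X_i^{k_i}\bigr)=\prod_{i=1}^l 2^{k_i-1}k_i!$, which is precisely the limit appearing in \eqref{eq:limitlawM}. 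Thus Theorem \ref{mainthm}$(ii)$ asserts exactly that every mixed moment of $(M_\infty(1),\dots,M_\infty(l))$ in the $N$-individual model converges, as $N\to\infty$, to the corresponding mixed moment of $(X_1,\dots,X_l)$.

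The substantive step is to check that this limiting moment sequence determines the limiting law uniquely, so that moment convergence can be upgraded to weak convergence. For each marginal I would control the growth of the even moments: with $m_k=2^{k-1}k!$, Stirling gives $m_{2k}^{-1/(2k)}\sim e/(4k)$, whose series diverges, so Carleman's condition holds and each $X_i$ is moment-determinate. Equivalently, and perhaps more transparently, the moment generating function
\[
\mathbb{E}\bigl(e^{zX}\bigr)=\frac12+\frac{1}{2(1-2z)}=\frac{1-z}{1-2z}
\]
is finite for $z<\tfrac12$, hence analytic near the origin, which already forces determinacy.

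To pass from determinate marginals to a determinate joint law I would invoke the several-variable moment theorem: each one-dimensional marginal of $(X_1,\dots,X_l)$ is determinate, hence the joint distribution is determinate. Alternatively one may simply note that the joint law here has independent components with a finite joint exponential moment $\prod_{i=1}^l \frac{1-z_i}{1-2z_i}$ on a neighborhood of the origin, which suffices. With determinacy in hand, the multivariate method of moments applies directly: a sequence of $\mathbb{R}_+^l$-valued random vectors whose every mixed moment converges to the mixed moments of a moment-determinate vector converges to that vector in distribution. This yields $(M_\infty(1),\dots,M_\infty(l))\Rightarrow(X_1,\dots,X_l)$.

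The main obstacle is precisely this determinacy verification. Moment convergence alone never implies weak convergence, so the heart of the argument is the growth estimate (Carleman, or the finite moment generating function near $0$) together with the reduction from determinate marginals to a determinate joint law; once determinacy is secured, the remainder is bookkeeping. A minor point to keep in mind throughout is that the random vector depends on $N$ through both its law and its support $[0,N]^l$, so the statement is genuinely a convergence of the $N$-indexed family, and the limit has unbounded support on $\mathbb{R}_+^l$.
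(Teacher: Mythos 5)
Your proposal is correct and follows essentially the same route as the paper: both verify that $2^{k-1}k!$ are the moments of $X$, establish moment determinacy (the paper via the positive radius of convergence of $\sum_k \alpha_k r^k/k!$ and Billingsley's Theorems 30.1--30.2, you via the equivalent Carleman/finite-MGF criterion), and deduce the joint convergence and asymptotic independence from the product form of the limiting mixed moments. Your treatment of the multivariate determinacy step is somewhat more explicit than the paper's, but it is the same argument.
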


This theorem also gives an explicit formula for the limiting distribution of all ancestors' weights.

\begin{corollary}\label{corhisto}
For any $i\in I$, let  $\tilde{M}_i$ be the $i$-th order statistic of the vector $(M_{\infty}(1),...,M_{\infty}(N))$. Then  for all $u\in\textcolor{black}{[}0,1\textcolor{black}{)}$, when $N$ goes to infinity,
$$\tilde{M}_{[Nu]}\rightarrow - 2 \ln(2(1-u))\mathbf{1}_{\textcolor{black}{\{}u>1/2\textcolor{black}{\}}} \quad\quad \textcolor{black}{\text{in probability}}$$ which coincides with the quantile function of the random variables $R_i$. 
\end{corollary}
This convergence is illustrated in Figure \ref{FigBarplot}.

\begin{figure}
\begin{center}
\includegraphics[scale=1]{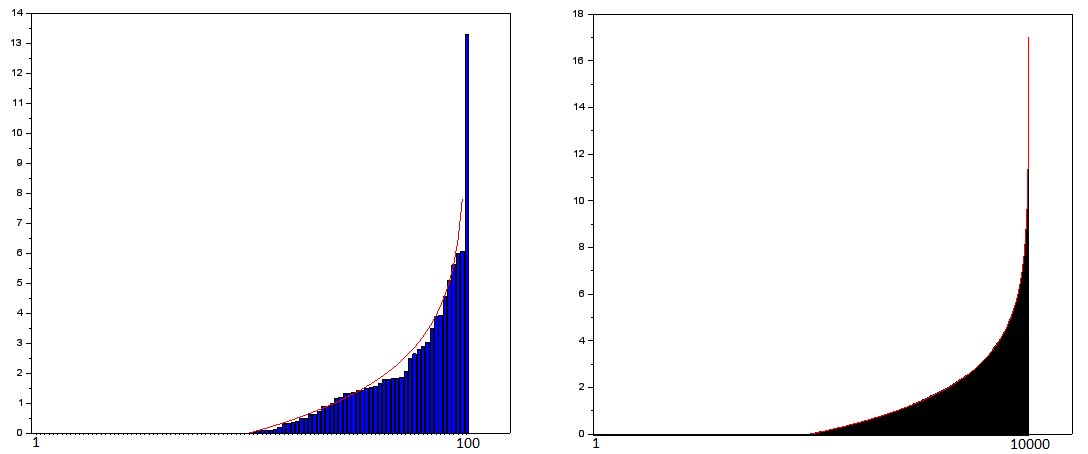}
\end{center} 
\caption{We run one simulation of the Moran model for $N=100$ (left) and $N=10000$ (right): in blue/black, we plot ancestors' weights \textcolor{black}{ranked in increasing order} after $100000$ time steps, whereas in red is the function $x\mapsto -\mathbf{1}_{x>N/2}\times 2\ln(2(1-x/N))$.\label{FigBarplot}}
\end{figure}

\begin{remark}
\textcolor{black}{Our main result consists in giving the asymptotic distribution of ancestors' weights. The proof of this result relies on guessing a priori what will be the form of the limiting distribution. This guess (from which Formula \eqref{eq:limitlawM} follows) can be done by assuming asymptotic independence of ancestors' weights. Under this hypothesis, we} find that the Laplace transform of the asymptotic distribution of $M_{\infty}(1)$ should satisfy the equation $h(\lambda)=\mathbb{E}(\exp(-\lambda M_{\infty}(1)))=\frac{1}{3}+\frac{2}{3}h\left(\frac{\lambda}{2}\right)h(\lambda)$, which leads to conjecture the limiting distribution of the weight of a given ancestor, as done in \cite{Derrida2000} for the Wright-Fisher model \textcolor{black}{(Equation (A.13))}. 
\end{remark}

\begin{remark} Similar results should hold when considering a non biological extension of our model in which the genealogical graph has constant reverse branching multiplicity $m$ (here $m=2$), i.e. each "child" has $m$ "parents". In that case the limiting weight of a given ancestor is expected to be equal to $0$ with probability $1/m$, or (with probability $1-1/m$) to follow an exponential law with parameter $1/m$.\end{remark}

\section{Proofs}

\begin{proof}[Proof of Theorem \ref{mainthm}]
$(i)$ The dynamics of $A_n$ defined in Equation \eqref{eq:defA} is the following: 

$$\left\{\begin{tabular}{l}
$A_{n+1}(i,j)=A_n(i,j) \quad\quad \text{if $i\neq \kappa_n$}$ \\ 
$A_{n+1}(\kappa_n,j)=\frac{1}{2}\left(A_n(\pi_n,j)+A_n(\mu_n,j)\right)$ \\ 
\end{tabular}  \right.$$
Therefore if we define $L_n(j)=\max_{i\in I} A_n(i,j)$ and $l_n(j)=\min_{i\in I} A_n(i,j)$ for all $n\in\mathbb{Z_+}$ and $j\in I$, the sequences $(L_n(j))_n$ and $(l_n(j))_n$ are respectively \textcolor{black}{non-increasing and non-decreasing} in $n$, for all $j\in I$. Then the difference $L_{n}(j)-l_{n}(j)$ is decreasing in $n$.  
Now at each time step $n$, let us define \textcolor{black}{$i_{min}(j)$ any element in $\argmin_i A_n(i,j)$, and $i_{max}(j)$ any element in $\argmax_i A_n(i,j)$}, and $$D_n=\{\text{at time $n, i_{\min}(j)$ is killed and replaced by an offspring of $i_{\max}(j)$}\}.$$ We know that $\mathbb{P}(D_n|\mathcal{F}_{n-1})=1/N^2$ so by the conditional Borel-Cantelli lemma \textcolor{black}{(\cite{Levy1937})} infinitely many events  $\{D_n,n\in\mathbb{N}\}$ occur. Note that once at most $N-1$ such events have occurred, the difference $L_{n}(j)-l_{n}(j)$ is at least divided by $2$. Therefore this difference $L_{n}(j)-l_{n}(j)$ goes almost surely to $0$ when $n$ goes to infinity.

%

Therefore $A_n(i,j)$ converges to $A_{\infty}(j)=L_{\infty}(j)$ almost surely, when $n$ goes to infinity. This also implies that  $M_{\infty}(j)=\sum_{i=1}^N A_{\infty}(j)=N A_{\infty}(j)$.

\medskip
$(ii)$ Note that $$M_n(j)=\sum_{i=1}^N A_n(i,j)=N\mathbb{P}\left(X_n^{(n)}=j|X_0^{(n)}\sim\mathcal{U}(I),\mathcal{G}_N\right)$$ where $\mathcal{U}(I)$ denotes the uniform law on $I$. For each $j\in I$, $M_n(j)$ is a deterministic function of the random pedigree graph $\mathcal{G}_N$, and the quantities $M_n(1)$, $M_n(2)$, ..., $M_n(N)$ are random variables adding up to $N$ that all depend on the same random graph $\mathcal{G}_N$. Let $(X^{(1,n)}_i,n-i)_{i\leq n}$, $(X^{(2,n)}_i,n-i)_{i\leq n}$, ..., $(X^{(k,n)}_i,n-i)_{i\leq n}$ be $k$ independent random walks on $\mathcal{G}_N$ starting at generation $n$, such that for all $j$, the positions $X^{(j,n)}_0$ are independent and follow the uniform law on $I$. For each $k_1, k_2,...,k_l\in\mathbb{Z}_+$ such that $\sum_{j=1}^lk_j=k$, we have

\begin{align*}M_n^{k_1}(1)...M_n^{k_l}(l)&=N^k\mathbb{P}\left(X_n^{(1,n)}=1|\mathcal{G}_N\right)\mathbb{P}\left(X_n^{(2,n)}=1|\mathcal{G}_N\right)...\,\mathbb{P}\left(X_n^{(k_1,n)}=1|\mathcal{G}_N\right)\\&\times\mathbb{P}\left(X_n^{(k_1+1,n)}=2|\mathcal{G}_N\right)...\mathbb{P}\left(X_n^{(k_1+...+k_l,n)}=l|\mathcal{G}_N\right)\\&=N^k\mathbb{P}\Big(X_n^{(1,n)}=...=X_n^{(k_1,n)}=1,...,\\&\quad\quad\quad X_n^{(k_1+...+k_{l-1}+1,n)}=...=X_n^{(k_1+...+k_l,n)}=l|\mathcal{G}_N\Big),\end{align*}

hence after integrating on the random pedigree graph $\mathcal{G}_N$,
\begin{align*}\mathbb{E}&\left(M_n^{k_1}(1)...M_n^{k_l}(l)\right)\\&=N^k\mathbb{P}(X_n^{(1,n)}=...=X_n^{(k_1,n)}=1,...,\\&\quad\quad\quad\quad\quad X_n^{(k_1+...+k_{l-1}+1,n)}=...=X_n^{(k_1+...+k_l,n)}=l).\end{align*}
After integrating on the graph, the sequence $(X_n^{(1,n)},...,X_n^{(k,n)})_{n\in\mathbb{N}}$ is a Markov chain on the finite space $I^k$. We can think of it as the (non independent) motion of $k$ particles on $I$. This Markov chain is irreducible and aperiodic. Indeed, at each time step, $(X_n^{(1,n)},...,X_n^{(k,n)})_{n\in\mathbb{N}}$ stays in the same state with positive probability (at least when the number of occupied sites is strictly smaller than $N$, \textcolor{black}{which will be the case when $N$ goes to infinity}), which gives aperiodicity. Next, starting from any element of $I^k$, it can reach the state $(1,1,...,1)$ with positive probability, for example due to the following sequence of events. During $k$ consecutive time steps, site $1$ is chosen as father, and one of the sites different from $1$ and occupied by $\{X_n^{(1,n)},...,X_n^{(k,n)}\}$ is chosen as child. Then all particles at the child position are sent to the father site $1$. Similarly, starting from the state $(1,1,...,1)$, the Markov chain $(X_n^{(1,n)},...,X_n^{(k,n)})_{n\in\mathbb{N}}$ can reach any state $(i_1,...,i_k)$ of $I^k$ \textcolor{black}{($i_u$ is the position of the $u-th$ particle)} with positive probability, for example due to the following sequence of events. Let us assume without loss of generality that $i_1\neq 1$, and choose $\delta_1\neq \delta_2\in I\setminus \{i_1,...,i_k\}$. Then the Markov chain $(X_n^{(1,n)},...,X_n^{(k,n)})_{n\in\mathbb{N}}$ jumps from $(1,1,...,1)$ to $(i_1\mathbf{1}_{i_u=i_1}+\delta_1\mathbf{1}_{i_u\neq i_1})_{1\leq \textcolor{black}{u}\leq k}$ with positive probability (site $1$ is chosen as child, and sites $i_1$ and $\delta_1$ are chosen as parents; all particles $u$ such that $i_u=i_1$ are sent to parental site $i_1$ and other particles are sent to parental site $\delta_1$). \textcolor{black}{At this stage, particles are either in site $i_1$ or in site $\delta_1$.} Next, the Markov chain $(X_n^{(1,n)},...,X_n^{(k,n)})_{n\in\mathbb{N}}$ jumps from $(i_1\mathbf{1}_{i_u=i_1}+\delta_1\mathbf{1}_{i_u\neq i_1})_{1\leq \textcolor{black}{u}\leq k}$ to $(i_1\mathbf{1}_{i_u=i_1}+i_2\mathbf{1}_{i_u=i_2}+\delta_2\mathbf{1}_{i_u\neq i_1})_{1\leq \textcolor{black}{u}\leq k}$ with positive probability (site $\delta_1$ is chosen as child, and sites $i_2$ and $\delta_2$ are chosen as parents; all particles $u$ such that $i_u=i_2$ are sent to parental site $i_2$ and other particles are sent to parental site $\delta_2$), and so on, alternating $\delta_1$ and $\delta_2$. This gives a path with positive probability and less than $k$ steps leading from $(1,1,...,1)$ to any $(i_1,...,i_k)$.

The Markov chain $(X_n^{(1,n)},...,X_n^{(k,n)})_{n\in\mathbb{N}}$ is then aperiodic and irreducible, and therefore its law converges, when $n$ goes to infinity, to a stationary law on $I^k$, denoted $\nu_{N,k}$. Then for each $k_1, k_2,...,k_l\in\mathbb{Z}_+$ such that $\sum_{j=1}^lk_j=k$,

$$\mathbb{E}\left(M_{\infty}^{k_1}(1)...M_{\infty}^{k_l}(l)\right)=N^k\nu_{N,k}(1,...,1,2,...,2,...,l,...,l)$$ where in the right hand side term the number $i\in[\![1,l]\!]=\{1,2,...,l\}$ is repeated $k_i$ times. Our aim  is then to prove that $$N^k\nu_{N,k}(1,...,1,2,...,2,...,l,...,l)\xrightarrow[N\rightarrow+\infty]{}\prod_{i=1}^l 2^{k_i-1}k_i!,$$ which will give Equation \eqref{eq:limitlawM} in Theorem \ref{mainthm}.

\bigskip
Note that $\nu_{N,k}$ is invariant under any permutation of its $k$ entries and any permutation on $I$, therefore it is determined by  its value on vectors of the form $(1,...,1,2,...,2,...,l,...,l)$. This invariance leads naturally to the following change in state space: for any $x\in I^k$, let us define and denote the configuration associated to $x$ by the multiset $\{x\}=\{k_1,k_2,...,k_l\}$, where $k_1,...,k_l$ are the number of repetitions of each element of $I$ present in $x$. The number $l$, also denoted $L(\{x\})$ will be called the size of the configuration $\{x\}$ (which will sometimes be called an $l$-configuration). As an example, if $N\geq4$, $k=4$ and $x=(3,1,4,4)$ then $\{x\}=\{1,1,2\}$ which has size $3$, and if $N\geq2$, $k=4$ and $x=(1,1,2,1)$, then $\{x\}=\{1,3\}$ which has size $2$.

As mentioned previously, by exchangeability of all sites of $I$ and all entries of $\nu_{N,k}$, if $\{x\}=\{y\}$ then $\nu_{N,k}(x)=\nu_{N,k}(y)$. Now if $\{x\}=\{k_1,...,k_l\}$, then the number of states $y\in I^k$ such that $\{y\}=\{x\}$ is equal to $\textcolor{black}{r(\{x\}):=}\frac{N!}{(N-l)!}\times\frac{k!}{\prod_{i=1}^lk_i!}\textcolor{black}{\times\frac{1}{\prod_{j=1}^kn_j!}}$ \textcolor{black}{in which $n_j$ is the number of i's such that $k_i=j$, i.e. the number of sites occupied by $j$ particles}. The term $\frac{N!}{(N-l)!}$ corresponds to the choice of the $l$ occupied sites, the term $\frac{k!}{\prod_{i=1}^lk_i!}$ to the dispatching of the $k$ Markov chains to these $l$ sites, \textcolor{black}{and the term $\frac{1}{\prod_{j=1}^kn_j!}$ to the exchangeability of sites with same multiplicity}.

\medskip
 So if we denote by $$\nu_{N,k}(\{x\})=\sum_{y\in I^k,\,\{y\}=\{x\}} \nu_{N,k}(y)$$ the limiting (when $n$ goes to infinity) probability  that the Markov chain $(X_n^{(1,n)},...,X_n^{(k,n)})_{n\in\mathbb{N}}$ is in configuration $\{x\}=\{k_1,...,k_l\}$, then we have $$\nu_{N,k}(y)=\frac{\nu_{N,k}(\{x\})}{\textcolor{black}{r(\{x\})}} \quad\text{for each $y\in I^k$ such that $\{y\}=\{x\}$.}$$
So when $N$ goes to infinity, \begin{equation}\label{eq:XtoY}\nu_{N,k}(x)\sim\frac{\nu_{N,k}(\{x\})}{N^{\textcolor{black}{l}}}\frac{\prod_{i=1}^lk_i!}{k!}\textcolor{black}{\times\prod_{j=1}^kn_j!},\end{equation}
and we now study $\nu_{N,k}(\{x\})$.

\bigskip
Let us consider the projection $(Y_n^{(k)})_{n\in\mathbb{N}}$ of the Markov chain $(X_n(1),...,X_n(k))_{n\in\mathbb{N}}$ on the space of configurations of $k$ elements, i.e. on the space $S_k=\{\{k_1,...,k_l\}:k_i\in\mathbb{Z}_+,\sum_{i=1}^lk_i=k\}$. Thanks to the symmetries of the construction, this projection $(Y_n^{(k)})_{n\in\mathbb{N}}$ is in fact an irreducible Markov chain whose transition probabilities are studied now, assuming that $N>k$.

At each time step the Markov chain $(Y_n^{(k)})_{n\in\mathbb{N}}$ can, starting from $\{k_1,...,k_l\}$, either stay at the same point, or jump to a different configuration, which has size $l-1$, $l$, or $l+1$. By considering \textcolor{black}{all} possible events occurring to the population, one obtains (details are given in Appendix \ref{sec:transitionproba}) that for any given positive integers $k_1,k_2,...,k_l$ such that $\sum_{i=1}^l k_i=k$ and any positive integers $k'_1,k'_2,...,k'_{l+1}$, when the population size $N$ goes to infinity,

\begin{equation}\label{eq:transitionproba}
\begin{aligned}\mathbb{P}(Y_{n+1}^{(k)}&=\{k'_1,k'_2,...,k'_{l+1}\}|Y_{n}^{(k)}=\{k_1,k_2,...,k_{l}\})\sim\frac{C(\{k_1,...,k_l\},\{k'_1,...,k'_{l+1}\})}{N}\\&\hspace{5cm} \text{ or $=0$ for all $N$},\\
\mathbb{P}(Y_{n+1}^{(k)}&=\{k'_1,k'_2,...,k'_{l-1}\}|Y_{n}^{(k)}=\{k_1,k_2,...,k_{l}\})\\&\sim \frac{2 C_1(\{k_1,...,k_l\},\{k'_1,...,k'_{l-1}\})}{N^2}, \frac{C_2(\{k_1,...,k_l\},\{k'_1,...,k'_{l-1}\})}{N^3} \\&\hspace{5cm}\text{ or $=0$ for all $N$},\\
\mathbb{P}(Y_{n+1}^{(k)}&=\{k'_1,k'_2,...,k'_{l}\}\neq \{k_1,k_2,...,k_{l}\}|Y_{n}^{(k)}=\{k_1,k_2,...,k_{l}\})\\&\quad\quad\quad\sim\frac{C(\{k_1,...,k_l\},\{k'_1,...,k'_{l}\}\textcolor{black}{)}}{N^2}, \text{ or $=0$ for all $N$},\\
\mathbb{P}(Y_{n+1}^{(k)}&=Y_{n}^{(k)}|Y_{n}^{(k)}=\{k_1,k_2,...,k_{l}\})\\&\quad\quad\quad=\left\{\begin{tabular}{l}
$1-\frac{C(\{k_1,...,k_l\})}{N}+O\left(\frac{1}{N^2}\right)$, \quad if $l< k$, \\ 
$1-\frac{k(k-1)}{N^2}+O\left(\frac{1}{N^3}\right)$, \quad if $l=k$. \\ 
\end{tabular}\right.\end{aligned}\end{equation}
where $C(\{k_1,...,k_l\})$, $C(\{k_1,...,k_l\},\{k'_1,...,k'_{l+1}\})$, $C_1(\{k_1,...,k_l\},\{k'_1,...,k'_{l-1}\})$, $C_2(\{k_1,...,k_l\},\{k'_1,...,k'_{l-1}\})$ and $C(\{k_1,...,k_l\},\{k'_1,...,k'_{l}\})$ are strictly positive constants. 

\medskip
Our proof now relies on the characterization of Markov chain stationary distributions given by the Markov chain tree Theorem, presented notably in \textcolor{black}{\cite{Shubert1975},  \cite{FreidlinWentzell} (Lemma 3.1 in Chapter 6), and \cite{LyonsPeres2016} (Section 4.4.)}. More precisely, for each configuration $\{x\}\in S_k$ let us introduce the set $G(\{x\})$ of  $\{x\}$-graphs which is the set of oriented trees rooted in (and directed to) $\{x\}$, included in the transition graph of $Y^{(k)}$ and spanning all points of $S_k$. For each oriented tree $g\in G(\{x\})$ we define its weight $\pi(g)$ as the product of the transition probabilities of all arrows of $g$, for the Markov chain $Y^{(k)}$.

Then from the Markov chain tree theorem, the stationary distribution of the Markov chain $Y^{(k)}$ is such that for each $\{x\}\in S_k$,

\begin{equation}\label{eq:Freidlin}\nu_{N,k}(\{x\})=\frac{\sum_{g\in G(\{x\})}\pi(g)}{\sum_{\{x\}\in S_k}\sum_{g\in G(\{x\})}\pi(g)}.\end{equation}


%

\bigskip
We now study the trees $g\in G(\{x\})$ and their weights' equivalents when the population size $N$ goes to infinity. Let us fix any configuration $\{x\}=\{k_1,...,k_l\}\in S_k$ and construct an oriented tree $g\in G(\{x\})$, as follows. First, one can find in the transition graph of the Markov chain $(Y_n^{(k)})_{n\in\mathbb{N}}$ a directed path from $\{1,...,1\}$ to $\{x\}$ with exactly $k-l$ steps, for instance by removing one entry and adding it to another entry at each step (as an example if $k=4$ and $\{x\}=\{2,2\}$, the path $\{1,1,1,1\}$-$\{2,1,1\}$-$\{2,2\}$ has positive probability). From \textcolor{black}{the proof of Equations \eqref{eq:transitionproba} in Appendix \ref{appendix},} \textcolor{black}{one can always find such a path whose probability} is equivalent to $C/N^{2(k-l)}$ when $N$ goes to infinity, where $C$ depends on the path but not on $N$. Now for each configuration $\{x'\}=\{k'_1,k'_2,...k'_l\}\neq \{1,...,1\}$ that is not in this path, we choose another configuration $\{x''\}=\{k''_1,k''_2,...k''_{l+1}\}$ such that an arrow from $\{x'\}$ to $\{x''\}$ exists in the transition graph of $(Y^{(k)}_n)_{n\in\mathbb{N}}$. Such a configuration $\{x''\}$ exists, since, assuming without loss of generality that $k\textcolor{black}{'}_1>1$, $(Y^{(k)}_n)_{n\in\mathbb{N}}$ jumps from $\{k'_1,k'_2,...k'_l\}$ to $\{k'_1-1,k'_2,...,k'_l,1\}$ with positive probability. From Equation \eqref{eq:transitionproba}, the transition probability from $\{x'\}$ to $\{x''\}$ is equivalent to $C/N$ when $N$ goes to infinity, where $C$ does not depend on $N$. The concatenation of the path from $\{1,...,1\}$ to $\{x\}$ and these arrows starting outside of this path is a set of $\#S_k-1$ arrows forming \textcolor{black}{a specific} oriented tree $\mathcal{T}$ rooted and directed to $\{x\}$. When the population size $N$ goes to infinity, the weight of this oriented tree is equivalent to
$$\frac{C(\mathcal{T})}{N^{2(k-l)}}\frac{1}{N^{\#S_k-1-(k-l)}}=\frac{C(\mathcal{T})}{N^{\#S_k-1+(k-l)}},$$ where the quantity $C(\mathcal{T})$ does not depend on $N$.  This is also the highest order of magnitude for the weight for an oriented tree pointing \textcolor{black}{to} $\{x\}$ since it contains only $(k-l)$ arrows from an $l'$-configuration to an $l'-1$-configuration which is the minimum number of such transitions, from Appendix \ref{appendix}.

Therefore for each configuration $\{x\}\in S_k$, since $G(\{x\})$ does not depend on $N$, \begin{equation}\label{eq:equivtree}\sum_{g\in G(\{x\})}\pi(g)\sim\frac{C}{N^{\#S_k-1+(k-L(\{x\}))}}.\end{equation}
when $N$ goes to infinity, where $C$ is a positive constant that depends only on $k$.
The term of highest order of the first sum in the denominator of \eqref{eq:Freidlin} is then for the configuration $\{x\}=\{1,1,...,1\}$ for which $L(\{x\})=k$, therefore Equations \eqref{eq:Freidlin} and \eqref{eq:equivtree} yield that
\begin{equation}\label{eq:equivalentnu}\nu_{N,k}(\{x\})\sim C(\{x\})\frac{N^{L(\{x\})}}{N^k}\quad\quad\text{when $N$ goes to infinity.}\end{equation}

Let us now come back to the Markov chain $(X_n^{(1,n)},...,X_n^{(k,n)})_{n\in\mathbb{N}}$, i.e. the annealed distribution of the $k$ particles Markov chain, and denote by $Q^{(N,k)}$ its transition matrix. We know from Equations \eqref{eq:XtoY} and \eqref{eq:equivalentnu} that its stationary law satisfies that for all $x\in I^k$: \begin{equation}\label{eq:nuX}\nu_{N,k}(x)\sim\frac{K(\{x\})}{N^k}\quad\quad\text{when $N$ goes to infinity},\end{equation} where $K(\{x\})=C(\{x\})\frac{\prod_{i=1}^l k_i!}{k!}\textcolor{black}{\times\prod_{j=1}^kn_j!}$ does not depend on $N$. 

\bigskip
We will now prove that $K(\{x\})=\prod_{i=1}^l2^{k_i-1}k_i!$.

The stationary law  $\nu_{N,k}$ of $(X_n^{(1,n)},...,X_n^{(k,n)})_{n\in\mathbb{N}}$ is the unique probability solution of $$\nu_{N,k}=\nu_{N,k}Q^{(N,k)}.$$ This equation can be decomposed as follows, as explained below :
\begin{equation}\label{eq:statnu}
\begin{aligned}\displaystyle
&\nu_{N,k}(1,...,1,2,...,2,...,l,...,l)=\nu_{N,k}(1,...,1,2,...,2,...,l,...,l)\times\frac{N-l}{N}\\&\!+2\sum_{\kappa=l+1}^N\sum_{\substack{\mu\in[\![1,l]\!]\\ \pi\in[\![1,l]\!]\\\mu<\pi}} \frac{1}{N(N-1)(N-2)}\\&\quad\times\sum_{i=0}^{k_{\mu}}\sum_{\substack{j=0 \\ i+j\neq0}}^{k_{\pi}}\left(\frac{1}{2}\right)^{i+j}\sum_{C_2}\nu_{N,k}(1,...,1,...,(\mu,\kappa),...,(\mu,\kappa),...,(\pi,\kappa),...,(\pi,\kappa),...,l,...,l)\\&\!+2\!\!\!\sum_{\kappa=l+1}^N\sum_{\mu=1}^l\sum_{\substack{\pi=l+1\\ \pi\neq\kappa}}^N\!\!\frac{1}{N(N-1)(N-2)}\sum_{i=1}^{\kappa_{\mu}}\left(\frac{1}{2}\right)^i\sum_{C_1}\nu_{N,k}(1,...,1,...,(\mu,\kappa),...,(\mu,\kappa),...,l,...,l).
\end{aligned}
\end{equation}

Note first that transitions under $Q^{(N,k)}$ occur as follows : a) birth and death and parental sites are chosen uniformly and independently among distinct sites (i.e. with probability $1/(N(N-1)(N-2))$ for each choice of a triple) b) particles initially at a birth and death site \textcolor{black}{are} independently assigned to one of the parental sites with probability $1/2$, c) other particles do not change site.

Note that choosing a birth and death position among $\{1,..,l\}$ yields a $0$ transition probability to the state $(1,...,1,2,...,2,...,l,...,l)$. The first term in Equation \eqref{eq:statnu} corresponds to the case where the birth and death position $\kappa$ was chosen among non occupied sites at the previous step (in which case the Markov chain $(X_n^{(1,n)},...,X_n^{(k,n)})$ does not change state and therefore was already in the site $(1,...,1,2,...,2,...,l,...,l)$ of interest). The second term corresponds to the case where $\kappa$ was an initially (i.e. before the transition) occupied site (different from $1$, ...,$l$), and both parental positions $\mu$ and $\pi$ belong to $\{1,...,l\}$. The sum over $C_2$  is a sum over all possible choices of $i$ particles in the position $\mu$ among the $k_{\mu}$ and $j$ particles in the position $\pi$ among the $k_{\pi}$, that were initially in the birth and death position $\kappa$, and the notation $(\mu,\kappa)$ (resp. $(\pi,\kappa)$) means either $\mu$ (resp. $\pi$) or $\kappa$, depending on this choice. The term $\left(1/2\right)^{i+j}$ is the probability that the $i$ chosen particles go to the maternal site $\mu$ while the $j$ others go to the paternal site $\pi$. The third term corresponds to  the case where $\kappa$ was an occupied site (different from $1$, ...,$l$), the mother was chosen among $\{1,...,l\}$ and the father among $\{l+1,...,N\}$ (or conversely, which leads to the "$2$" factor). As before the sum over $C_1$ is a sum over all possible choices of $i$ particles in the position $\mu$ among the $k_{\mu}$, that were at $\kappa$ at the previous step, and the notation $(\mu,\kappa)$ means either $\mu$ or $\kappa$, depending on this choice. The term $\left(1/2\right)^{i}$ is the probability that the $i$ chosen particles all go to the maternal site.

\bigskip
\textcolor{black}{We now use Equation \eqref{eq:nuX} and consider the first order in Equation \eqref{eq:statnu}. Note first that a cancellation occurs for the $0$-order term in $N$, so  the first order in Equation \eqref{eq:statnu} is $1/N$ after this cancellation. Note also that the second term in the right hand-side of \eqref{eq:nuX} does not contribute to this order. Finally, in the last term, in the particular case where $i=\kappa_{\mu}$, $\nu_{N,k}(1,...,1,...,(\mu,\kappa),...,(\mu,\kappa),...,l,...,l)=\nu_{N,k}(1,...,1,2,...,2,...,l,...,l)$.}

\begin{equation}\label{eq:equationK}\begin{aligned}K(\{k_1,...,k_l\})&\times\left[l-2\sum_{\mu=1}^l\left(\frac{1}{2}\right)^{k_\mu}\right]\\&=2\sum_{\mu=1}^l\sum_{i=1}^{k_{\mu}-1}\left(\frac{1}{2}\right)^i\left(\!\begin{array}{c}k_{\mu}\\i\end{array}\right)K(\{k_1,...,k_{\mu}-i,...,k_l,i\}).\end{aligned}\end{equation}

The left term $l-2\sum_{\mu=1}^l\left(\frac{1}{2}\right)^{k_\mu}$ is non null as long as  $\{k_1,...,k_l\}\neq\{1,...,1\}$ so Equation \eqref{eq:equationK} gives the value of $K$ for any $l$-configuration as a function of the values of $K$ for $l+1$-configurations, so Equation \eqref{eq:equationK} admits only one solution once $K(\{1, ..., 1\})$ is fixed, by induction. Therefore all solutions of Equation \eqref{eq:equationK} are proportional. We now prove that for any constant $C(k)$, $K(\{x\})=C(k)\times\prod_{i=1}^l2^{k_i-1}k_i!$ is a solution to this equation. On the right side we get

\begin{align*}C(k)\times 2\sum_{\mu=1}^l\sum_{i=1}^{k_{\mu}-1}\left(\frac{1}{2}\right)^{i}\frac{k_{\mu}!}{i!\,(k_{\mu}-i)!}&\left(\prod_{j=1,j\neq\mu}^lk_j!\,2^{k_j-1}\right)i!\,2^{i-1}(k_{\mu}-i)!\,2^{k_{\mu}-i-1}\\&
=C(k)\times 2\sum_{\mu=1}^l\sum_{i=1}^{k_{\mu}-1}\left(\frac{1}{2}\right)^{i+1}\left(\prod_{j=1}^lk_j!\,2^{k_j-1}\right)\\&
=C(k)\times \left(\prod_{j=1}^lk_j!\,2^{k_j-1}\right)\sum_{\mu=1}^l\frac{\frac{1}{2}-\left(\frac{1}{2}\right)^{k_{\mu}}}{1-\frac{1}{2}}\\&
=C(k)\times \left(\prod_{j=1}^lk_j!\,2^{k_j-1}\right)\left[\sum_{\mu=1}^l\left(1-\left(\frac{1}{2}\right)^{k_{\mu}-1}\right)\right]\end{align*} which is equal to the left-side term of Equation \eqref{eq:equationK}.

\bigskip
We finally prove that $C(k)=1$. Note that $$\sum_{x\in I^k}\nu_{N,k}(x)=\sum_{l=1}^k\sum_{x\in I^k, \textcolor{black}{L(\{x\})}=l}\nu_{N,k}(x)=1.$$ Now for any $l\in\{1,...,k\}$, from Equation \eqref{eq:equivalentnu}, $$\sum_{x\in I^k, \textcolor{black}{L(\{x\})}=l}\nu_{N,k}(x)=\sum_{\{x\}\in S_k, \textcolor{black}{L(\{x\})}=l}\nu_{N,k}(\{x\})\sim \frac{c(k,l)}{N^{k-l}},$$ when population size $N$ goes to infinity, where $c(k,l)$ is a quantity that does not depend on $N$. Therefore when $N$ goes to infinity, $$1=\sum_{x\in I^k}\nu_{N,k}(x)\sim \sum_{x\in I^k, \textcolor{black}{L(\{x\})}=k}\nu_{N,k}(x)\sim K(\{1,...,1\})=C(k).$$
\qed\end{proof}

\bigskip
\begin{proof}[of Corollary \ref{maincor}]
First, if $X$ is a random variable with probability density $\frac{1}{2}\delta_0(t)+\frac{1}{4}e^{-\frac{1}{2}t}$ on $\mathbb{R}_+$, then $$\alpha_k=\mathbb{E}(X^k)=2^{k-1}k!$$ and the power series $\sum_k\alpha_kr^k/k!$ has a positive radius of convergence. From  Theorem \ref{mainthm}, $\mathbb{E}(M_{\infty}(1)^k)\rightarrow \alpha_k$ for all $k\in\mathbb{N}$ when $N\rightarrow\infty$. Then from Theorems 30.1 and 30.2 of \cite{billingsley},  $M_{\infty}(1)\xRightarrow[N\rightarrow\infty]{} X$, and the limiting independence between the random variables $M_{\infty}(1)$,..., $M_{\infty}(l)$ follows similarly, from the product decomposition in the right-hand side of Equation \eqref{eq:limitlawM}.
\qed\end{proof}

\begin{proof}[of Corollary 2]
 \textcolor{black}{The function $\psi:u\mapsto-\mathbf{1}_{u>1/2}\times 2 \ln(2(1-u))$ defined on $[0,1)$ is the inverse of the cumulative distribution function of the random variable $R_1$, which satisfies $\psi^{-1}(x)=1-\frac{1}{2}e^{-x/2}$ for all $x\in\mathbb{R}$. Now let $x\in\mathbb{R}$ and $D_{\infty}(x)=\frac{\#\{j: M_{\infty}(j)\leq x\}}{N}$. We know from Corollary \ref{maincor} that $\mathbb{E}(D_{\infty}(x))=\mathbb{P}(M_{\infty}(1)\leq x)\longrightarrow \mathbb{P}(R_1\leq x)$ when $N$ goes to infinity. Now \begin{align*}\mathbb{E}(D_{\infty}(x)^2)&=\frac{1}{N^2}\mathbb{E}(\#\{j,k: M_{\infty}(j)\leq x,M_{\infty}(k)\leq x\})\\&=\frac{1}{N}\mathbb{P}(M_{\infty}(1)<x)+\frac{N(N-1)}{N^2}\mathbb{P}(M_{\infty}(1)\leq x,M_{\infty}(2)\leq x))\\&\longrightarrow\mathbb{P}(R_1\leq x)^2\quad  \text{when $N$ goes to infinity, from Theorem \ref{mainthm}.}\end{align*} Therefore, for all $x\in\mathbb{R}$, $D_{\infty}(x)$ converges to $\mathbb{P}(R_1\leq x)=\psi^{-1}(x)$ in $L^2$, hence in probability. Note that $$\tilde{M}_j>a\Rightarrow N D_{\infty}(a)\leq j-1$$ and $$\tilde{M}_j<a\Rightarrow ND_{\infty}(a)\geq j.$$ Therefore for any $u>0$,
  \begin{align*}\mathbb{P}(|\tilde{M}_{[Nu]}-\psi(u)|>\epsilon)&\leq\mathbb{P}\Big(D_{\infty}(\psi(u)+\epsilon)\leq \frac{[Nu]-1}{N}\Big)+\mathbb{P}\Big(D_{\infty}(\psi(u)-\epsilon)\geq \frac{[Nu]}{N}\Big)\\&\leq \mathbb{P}(D_{\infty}(\psi(u)+\epsilon)\leq u-\frac{1}{N})+\mathbb{P}(D_{\infty}(\psi(u)-\epsilon)\geq u-\frac{1}{N})\\&=\mathbb{P}\Big(D_{\infty}(\psi(u)+\epsilon)\leq \psi^{-1}(\psi(u)+\epsilon)\\&\quad\quad\quad\quad+[u-\frac{1}{N}-\psi^{-1}(\psi(u)+\epsilon)]\Big)\\&+\mathbb{P}\Big(D_{\infty}(\psi(u)-\epsilon)\geq \psi^{-1}(\psi(u)-\epsilon)\\&\quad\quad\quad\quad+[u-\frac{1}{N}-\psi^{-1}(\psi(u)-\epsilon)]\Big).\end{align*} Now $\psi^{-1}$ is strictly increasing on $\mathbb{R}_+$ so $\psi^{-1}(\psi(u)+\epsilon)-u=C_+(u,\epsilon)>0$ for all $u>0$, and since $\psi^{-1}(v)=0$ if $v<0$, $u-\psi^{-1}(\psi(u)-\epsilon)=C_-(u,\epsilon)>0$ for all $u>0$. So \begin{align*}\mathbb{P}(|\tilde{M}_{[Nu]}-\psi(u)|>\epsilon)&\leq\mathbb{P}(D_{\infty}(\psi(u)+\epsilon)\leq \psi^{-1}(\psi(u)+\epsilon)-C_+(u,\epsilon)-\frac{1}{N})\\&+\mathbb{P}\Big(D_{\infty}(\psi(u)-\epsilon)\geq \psi^{-1}(\psi(u)-\epsilon)-\frac{1}{N}+C_-(u,\epsilon)\Big)\\&\leq \mathbb{P}(D_{\infty}(\psi(u)+\epsilon)\leq \psi^{-1}(\psi(u)+\epsilon)-C_+(u,\epsilon))\\&+\mathbb{P}\Big(D_{\infty}(\psi(u)-\epsilon)\geq \psi^{-1}(\psi(u)-\epsilon)+\frac{C_-(u,\epsilon))}{2} \Big)\end{align*} if $N$ is large enough. So $\mathbb{P}(|\tilde{M}_{[Nu]}-\psi(u)|\geq\epsilon)$ goes to $0$ when $N$ goes to infinity.
} 
 
  \qed
\end{proof}


\textcolor{black}{\textbf{Acknowledgement: } We are very thankful to the referees as well as the associate editor for their useful comments and criticisms which helped us considerably improve the manuscript.}

\appendix

\section{Transition matrix of the configuration Markov chain}\label{appendix}

In this section we derive the transition probabilities of the Markov chain $\left(Y_n^{(k)}\right)_{n\in\mathbb{N}}$, whose states are configurations of the form $\{k_1,k_2,...,k_l\}$ where $k_i\in\mathbb{Z}_+$, $\sum_{i=1}^l k_i=k$ and $l$ is called the size of the configuration. We distinguish $4$ types of events occurring to this chain: jumping from a configuration with size $l$ to a configuration with size $l+1$, jumping from a configuration with size $l$ to a configuration with size $l-1$, jumping from a configuration with size $l$ to a different configuration with size $l$, and staying in the same configuration.

\bigskip
The size of the configuration is increased by $1$ during one time step if one of the $l$ occupied sites is chosen as position of the child and all the Markov chains present at that site are sent on exactly two parental positions that are distinct from the already occupied sites (both new parental positions must be occupied after the repartition of the Markov chains present at the children site). This gives that the probability to jump from $\{k_1,...,k_l\}$ to a given state $\{k'_1,...,k'_{l+1}\}$ (supposing that $N>k$) is equal to

\begin{align}\label{eq:l->l+1}\frac{(N-l)(N-l-1)}{(N-1)(N-2)}\times \frac{C(\{k_1,...,k_l\},\{k'_1,...,k'_{l+1}\})}{N}\hspace{1cm} \text{or $0$},\end{align} where the quantity \begin{align*}C(\{k_1,...,k_l\},\{k'_1,...,k'_{l+1}\})&=\sum_{u=1}^l\sum_{a=0}^{k_{u}-1}\left(\begin{array}{c}
k_u \\ 
a
\end{array} \right)\left(\frac{1}{2}\right)^{k_u}\\&\times\mathbf{1}_{\{k'_1,...,k'_{l+1}\}=\{k_1,...,k_{u-1},k_{u+1},...,k_l, a,k_u-a\}}\end{align*} does not depend on $N$, which gives the equivalence result in the first line of Equation \eqref{eq:transitionproba}. Note that in the particular case where $l=k$ then the transition probability from $\{k_1,...,k_l\}$ to any state $\{k'_1,...,k'_{l+1}\}$ is equal to $0$.

\bigskip
The size of the configuration is decreased by $1$ if one of the occupied sites is chosen as child position and all the Markov chains present at that site are sent on one or two already occupied sites (which can happen either when both parental positions were already occupied, or when one parental position was not already occupied but no Markov chains present at the child position are sent to this parent). This gives that the probability for the Markov chain $\left(Y_n^{(k)}\right)_{n\in\mathbb{N}}$ to jump from $\{k_1,...,k_l\}$ to a given state $\{k'_1,...,k'_{l-1}\}$ (supposing that $N>k$) is equal to

\begin{equation}
\begin{aligned}\label{eq:l->l-1-general}&\frac{2(N-l)}{N(N-1)(N-2)} C_1(\{k_1,...,k_l\},\{k'_1,...,k'_{l-1}\}))\\&+\frac{1}{N(N-1)(N-2)}C_2(\{k_1,...,k_l\},\{k'_1,...,k'_{l-1}\})\end{aligned}\end{equation} where the quantities \begin{align*}C_1(&\{k_1,...,k_l\},\{k'_1,...,k'_{l-1}\})=\sum_{1\leq u\neq v\leq l}\left(\frac{1}{2}\right)^{k_{u}}\\&\times\mathbf{1}_{\{k'_1,...,k'_{l-1}\}=\{k_1,...,k_{u-1},k_{u+1},...,k_{v-1},k_{v+1},...,k_l, k_{v}+k_u\}}\end{align*} and \begin{align*}C_2(&\{k_1,...,k_l\},\{k'_1,...,k'_{l-1}\})=\sum_{1\leq u\neq v\neq w\leq l}\,\sum_{a=0}^{k_{u}}\left(\begin{array}{c}
k_{u} \\ 
a
\end{array} \right)\left(\frac{1}{2}\right)^{k_{u}}\\&\times\mathbf{1}_{\{k'_1,...,k'_{l-1}\}=\{k_1,...,k_{u-1},k_{u+1},...,k_{v-1},k_{v+1},...,k_{w-1},k_{w+1},...,k_l, k_{w}+a,k_{v}+k_{u}-a\}},\end{align*} do not  depend on $N$. Note that if there exist $u\neq v$ in $[\![1,l]\!]$ such that $k'_{v}=k_{v}+k_{u}$ and $k'_i=k_i$ otherwise, then the quantity $C_2(\{k_1,...,k_l\},\{k'_1,...,k'_{l-1}\})$ is positive. This gives the equivalence result stated in the second line of Equation \eqref{eq:transitionproba}.

\bigskip
To keep exactly $l$ occupied sites while changing state, we need to choose one parental site among the occupied sites and one parental site among the unoccupied sites, and at least one Markov chain present at the child site must choose the last one.  This gives that the probability for the Markov chain $\left(Y_n^{(k)}\right)_{n\in\mathbb{N}}$ to jump from $\{k_1,...,k_l\}$ to a given state \textcolor{black}{$\{k'_1,...,k'_{l}\}\neq \{k_1,...,k_l\}$} is equal, when $N>k$, to

\begin{equation}\label{eq:l->l}\frac{2(N-l)}{N(N-1)(N-2)}C(\{k_1,...,k_l\},\{k'_1,...,k'_l\}),\end{equation}
where the quantity \begin{align*}C(\{k_1,...,k_l\},&\{k'_1,...,k'_{l}\})=\sum_{1\leq u\neq v\leq l} \,\sum_{a=1}^{k_{u}}\left(\begin{array}{c}
k_{u} \\ 
a
\end{array} \right)\left(\frac{1}{2}\right)^{k_{u}}\\&\times\mathbf{1}_{\{k'_1,...,k'_{l}\}=\{k_1,...,k_{u-1},k_{u+1},...,k_{v-1},k_{v+1},...,k_l, a,k_{v}+k_{u}-a\}}\end{align*} is independent of $N$, which gives the equivalence result in the third line of Equation \eqref{eq:transitionproba}.

\bigskip
The last event is when the Markov chain $\left(Y_n^{(k)}\right)_{n\in\mathbb{N}}$ stays on the same state $\{k_1,...,k_l\}$. From previous calculations we get that the probability of this event is equal to 
\begin{align}\label{eq:stay}\left\{\begin{tabular}{l}
$1-\frac{C(\{k_1,...,k_l\})}{N}+O\left(\frac{1}{N^2}\right)\text{if $l\neq k$} $\\
$1-\frac{k(k-1)}{N^2}+O\left(\frac{1}{N^3}\right)\text{if $l=k$.}$
\end{tabular} \right.\end{align}
The first line is directly given by Equations \eqref{eq:l->l+1}, \eqref{eq:l->l-1-general} and \eqref{eq:l->l}. The second line can be calculated directly without using these equations. We indeed consider the case in which all $k$ particles are located in different sites, and consider the probability for the configuration Markov chain to change state during one time step. For this event to occur, one of the occupied sites must be chosen for the child position, and the associated particle must be sent to another of these occupied sites, which gives a $k(k-1)/(N(N-1))$ probability.


\label{sec:transitionproba}
\bibliographystyle{abbrv}
 \bibliography{biblio_genealogy}

\end{document}